\newtheorem{proposition}{Proposition}
\newtheorem{corollary}{Corollary}
\theoremstyle{remark}
\newtheorem*{remark}{Remark}
\providecommand{\abs}[1]{\lvert#1\rvert}
\DeclareRobustCommand{\stirling}{\genfrac\{\}{0pt}{}}
\begin{document}

\hspace{0.2in}
\title{Two Propositions Involving the Standard Representation of $S_n$}
\author{Shanshan Ding}
\address{Department of Mathematics, University of Pennsylvania, 209 South 33rd Street, Philadelphia, PA 19104} \email{shanshand@math.upenn.edu}
\begin{abstract}We present here two standalone results from a forthcoming work on the analysis of Markov chains using the representation theory of $S_n$. First, we give explicit formulas for the decompositions of tensor powers of the defining and standard representations of $S_n$.  Secondly, we prove that any Markov chain on $S_n$ starting with one fixed point and whose increment distributions are class measures will always average exactly one fixed point.\end{abstract}
\maketitle

The defining, or permutation, representation of $S_n$ is the $n$-dimensional representation $\varrho$ where
\begin{equation}
  \ (\varrho(\sigma))_{i, j} = \begin{cases}
		1  &\text{$\sigma(j)=i$}\\
		0 			 &\text{otherwise}.
    \end{cases}
\end{equation}
Since the fixed points of $\sigma$ can be read off of the matrix diagonal, the character of $\varrho$ at $\sigma$, $\chi_{\varrho}(\sigma)$, is precisely the number of fixed points of $\sigma$.  The irreducible representations of $S_n$ are parametrized by the partitions of $n$, and $\varrho$ decomposes as $S^{(n-1, 1)} \oplus S^{(n)}$.  Note that $\chi_{S^{(n-1, 1)}}(\sigma)$ is one less than the number of fixed points of $\sigma$.  In the terminology of \cite{FH91}, we call the $(n-1)$-dimensional irrep $S^{(n-1, 1)}$ the standard representation of $S_n$.

Our first proposition gives a nice formula for the decomposition of tensor powers of $\varrho$ into irreps, i.e. the coefficients $a_{\lambda, r}$ in the expression 
\begin{equation}
\varrho^{\otimes r} = \underset{\lambda \vdash n}{\bigoplus} a_{\lambda, r} S^{\lambda} := \underset{\lambda \vdash n}{\bigoplus} (S^{\lambda})^{\oplus a_{\lambda, r}}.
\end{equation}

\begin{proposition}\label{PrIrreps}
Let $\lambda \vdash n$ and $1 \le r \le n-\lambda_2$.  The multiplicity of $S^{\lambda}$ in the irreducible representation decomposition of $\varrho^{\otimes r}$ is given by
\begin{equation}\label{EqIrreps}
a_{\lambda, r} = f^{\bar{\lambda}} \sum_{i=\abs{\bar{\lambda}}}^r \binom{i}{\abs{\bar{\lambda}}}\stirling{r}{i},
\end{equation}
where $\bar{\lambda}=(\lambda_2, \lambda_3, \ldots)$ with weight $\abs{\bar{\lambda}}$, $f^{\bar{\lambda}}$ is the number of standard Young tableaux of shape $\bar{\lambda}$, and $\stirling{r}{i}$ is a Stirling number of the second kind. 
\end{proposition}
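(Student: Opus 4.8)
The plan is to pass to characters. Since $\chi_\varrho(\sigma)$ equals the number of fixed points of $\sigma$, the character of $\varrho^{\otimes r}$ is $\chi_\varrho^{\,r}$, so $a_{\lambda,r}=\langle\chi_\varrho^{\,r},\chi_{S^\lambda}\rangle$. The first ingredient is the classical expansion of a power in falling factorials, $x^r=\sum_{k=0}^r\stirling{r}{k}\,x(x-1)\cdots(x-k+1)$. Evaluating at $x=\chi_\varrho(\sigma)$, the $k$-th falling factorial counts the ordered $k$-tuples of \emph{distinct} points of $\{1,\dots,n\}$ fixed by $\sigma$, i.e.\ it is the character of the permutation action of $S_n$ on such tuples. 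For $k\le n$ that action is transitive with point stabilizer a copy of $S_{n-k}$, so the associated module is $\mathrm{Ind}_{S_{n-k}}^{S_n}\mathbf 1=M^{(n-k,1^k)}$, the Young permutation module. As $k$ ranges only up to $r\le n-\lambda_2\le n$, we obtain
\begin{equation*}
a_{\lambda,r}=\sum_{k=1}^{r}\stirling{r}{k}\,\bigl\langle\chi_{M^{(n-k,1^k)}},\chi_{S^\lambda}\bigr\rangle=\sum_{k=1}^{r}\stirling{r}{k}\,K_{\lambda,(n-k,1^k)},
\end{equation*}
the second equality being Young's rule, with $K$ a Kostka number.

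Next I would compute $K_{\lambda,(n-k,1^k)}$, the number of semistandard Young tableaux of shape $\lambda$ with $n-k$ ones and one each of $2,\dots,k+1$. Strict increase down columns forces every $1$ into the first row, and weak increase along that row puts the ones in its first $n-k$ cells; in particular this requires $k\ge n-\lambda_1=\abs{\bar\lambda}$, which is the lower limit of the sum in \eqref{EqIrreps}. The remaining $k$ cells are filled with the distinct values $2,\dots,k+1$, so a tableau of this type is precisely a standard Young tableau of the skew shape $\lambda/(n-k)$; hence $K_{\lambda,(n-k,1^k)}=f^{\lambda/(n-k)}$.

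The crux is the hypothesis $r\le n-\lambda_2$. For every $k$ with $\abs{\bar\lambda}\le k\le r$ we then have $n-k\ge\lambda_2$, so the cells of $\lambda/(n-k)$ in row $1$ occupy columns strictly to the right of column $\lambda_2$ and therefore share no column with rows $2,3,\dots$ of $\lambda$, which constitute the straight shape $\bar\lambda$. Consequently $\lambda/(n-k)$ is the disjoint union — sharing neither a row nor a column — of a single row of length $\lambda_1-(n-k)$ and the shape $\bar\lambda$, and a standard filling of such a union amounts to choosing which of the $k$ values go in the row and then filling each piece independently:
\begin{equation*}
f^{\lambda/(n-k)}=\binom{k}{\abs{\bar\lambda}}\,f^{(\lambda_1-(n-k))}\,f^{\bar\lambda}=\binom{k}{\abs{\bar\lambda}}f^{\bar\lambda},
\end{equation*}
since a one-row shape has a unique standard tableau. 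Substituting into the displayed sum (with summation index renamed $i$) yields \eqref{EqIrreps}. I expect the genuine content to be this last step: checking that it is exactly the bound $r\le n-\lambda_2$ that disconnects the skew shape, together with a clean treatment of the degenerate cases $\lambda=(n)$ (where $\bar\lambda=\varnothing$ and $f^{\bar\lambda}=1$) and $\lambda_1=n-k$ (where the row piece is empty); the character computation and the appeal to Young's rule are standard.
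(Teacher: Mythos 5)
Your proof is correct, but it takes a genuinely different route from the paper. The paper leans on the Goupil--Chauve exponential generating function $\sum_{r} a_{\lambda,r}x^r/r! = \frac{f^{\bar\lambda}}{\abs{\bar\lambda}!}e^{e^x-1}(e^x-1)^{\abs{\bar\lambda}}$ and extracts coefficients using the EGFs for Stirling and Bell numbers together with a convolution identity from Abramowitz--Stegun; the combinatorics is thus outsourced to the cited generating function, and the hypothesis $r\le n-\lambda_2$ enters only implicitly as the range of validity of that formula. You instead work directly with characters: expanding $\chi_\varrho^{\,r}$ via $x^r=\sum_k\stirling{r}{k}(x)_k$ identifies $\varrho^{\otimes r}$ with $\bigoplus_k\stirling{r}{k}M^{(n-k,1^k)}$ as virtual characters, Young's rule converts the multiplicities to Kostka numbers, and the computation $K_{\lambda,(n-k,1^k)}=f^{\lambda/(n-k)}=\binom{k}{\abs{\bar\lambda}}f^{\bar\lambda}$ is where the hypothesis $r\le n-\lambda_2$ does visible work, by forcing the skew shape $\lambda/(n-k)$ to split into a row and $\bar\lambda$ sharing no row or column. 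All the steps check out (including the transitivity of $S_n$ on $k$-tuples of distinct points for $k\le n$, the vanishing of the Kostka number for $k<\abs{\bar\lambda}$, and the factorization of $f^{\lambda/(n-k)}$, which only requires the two pieces to share no row and no column even when they touch diagonally at $n-k=\lambda_2$). What your approach buys is a self-contained proof that does not presuppose the Goupil--Chauve result and that explains combinatorially where the bound $r\le n-\lambda_2$ comes from; what the paper's approach buys is brevity given the citation, and a derivation that sits naturally next to the corollary for $(S^{(n-1,1)})^{\otimes r}$, which is obtained there by dividing the same generating function by $e^{x}$.
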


\begin{proof}
The heavy lifting had already been done by Goupil and Chauve, who derived in \cite{GC06} the generating function
\begin{equation}\label{EqTensorGF}
\sum_{r \ge \abs{\bar{\lambda}}} a_{\lambda, r} \frac{x^r}{r!} = \frac{f^{\bar{\lambda}}}{\abs{\bar{\lambda}}!}e^{e^x-1}(e^x-1)^{\abs{\bar{\lambda}}}.
\end{equation}
By (24b) and (24f) in Chapter 1 of \cite{Sta97},
\begin{equation}\label{EqStanley}
\sum_{s \ge j} \stirling{s}{j} \frac{x^s}{s!} = \frac{(e^x-1)^j}{j!}
\end{equation} 
and
\begin{equation}
\sum_{t \ge 0} B_t \frac{x^t}{t!} = e^{e^x-1},
\end{equation}
where $B_0:=1$ and $B_t = \sum_{q=1}^t \stirling{t}{q}$ is the $t$-th Bell number, so we obtain from (\ref{EqTensorGF}) that
\begin{equation}
\frac{a_{\lambda, r}}{r!}= f^{\bar{\lambda}}\sum_{s+t=r} \frac{B_t}{s!t!}\stirling{s}{\abs{\bar{\lambda}}},
\end{equation}
and thus
\begin{equation}
\begin{split}
\frac{a_{\lambda, r}}{f^{\bar{\lambda}}} &= \sum_{t=0}^{r-\abs{\bar{\lambda}}} B_t\binom{r}{t}\stirling{r-t}{\abs{\bar{\lambda}}} \\ &= \stirling{r}{\abs{\bar{\lambda}}} + \sum_{t=1}^{r-\abs{\bar{\lambda}}} \sum_{q=1}^t \stirling{t}{q}\binom{r}{t}\stirling{r-t}{\abs{\bar{\lambda}}} \\ &= \stirling{r}{\abs{\bar{\lambda}}} + \sum_{q=1}^{r-\abs{\bar{\lambda}}} \sum_{t=q}^{r-\abs{\bar{\lambda}}} \stirling{t}{q}\binom{r}{t}\stirling{r-t}{\abs{\bar{\lambda}}}.
\end{split}
\end{equation}
By (24.1.3, II.A) of \cite{AS65},
\begin{equation}
\sum_{t=q}^{r-\abs{\bar{\lambda}}} \stirling{t}{q}\binom{r}{t}\stirling{r-t}{\abs{\bar{\lambda}}} = \binom{q+\abs{\bar{\lambda}}}{\abs{\bar{\lambda}}}\stirling{r}{q+\abs{\bar{\lambda}}},
\end{equation} 
so that
\begin{equation}
\begin{split}
\frac{a_{\lambda, r}}{f^{\bar{\lambda}}} &= \stirling{r}{\abs{\bar{\lambda}}} + \sum_{q=1}^{r-\abs{\bar{\lambda}}} \binom{q+\abs{\bar{\lambda}}}{\abs{\bar{\lambda}}}\stirling{r}{q+\abs{\bar{\lambda}}} \\ &= \stirling{r}{\abs{\bar{\lambda}}} + \sum_{i=\abs{\bar{\lambda}}+1}^{r} \binom{i}{\abs{\bar{\lambda}}}\stirling{r}{i} = \sum_{i=\abs{\bar{\lambda}}}^{r} \binom{i}{\abs{\bar{\lambda}}}\stirling{r}{i},
\end{split}
\end{equation}
as was to be shown.
\end{proof}

Now, let $b_{\lambda, r}$ be the multiplicities such that
\begin{equation}
(S^{(n-1, 1)})^{\otimes r} = \underset{\lambda \vdash n}{\bigoplus} b_{\lambda, r} S^{\lambda}.
\end{equation}
Goupil and Chauve also derived the generating function 
\begin{equation}\label{EqTensorGF2}
\sum_{r \ge \abs{\bar{\lambda}}} b_{\lambda, r} \frac{x^r}{r!} = \frac{f^{\bar{\lambda}}}{\abs{\bar{\lambda}}!}e^{e^x-x-1}(e^x-1)^{\abs{\bar{\lambda}}}, 
\end{equation}
so from Proposition \ref{PrIrreps} we can obtain a formula for the decomposition of $(S^{(n-1, 1)})^{\otimes r}$ as well.
\begin{corollary}\label{CoIrreps}
Let $\lambda \vdash n$ and $1 \le r \le n-\lambda_2$.  The multiplicity of $S^{\lambda}$ in the irreducible representation decomposition of $(S^{(n-1, 1)})^{\otimes r}$ is given by
\begin{equation}
b_{\lambda, r} = f^{\bar{\lambda}}\sum_{s=\abs{\bar{\lambda}}}^r (-1)^{r-s} \binom{r}{s}\left( \sum_{i=\abs{\bar{\lambda}}}^s \binom{i}{\abs{\bar{\lambda}}}\stirling{s}{i}\right).
\end{equation}
\end{corollary}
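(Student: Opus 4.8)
The plan is to deduce Corollary \ref{CoIrreps} from Proposition \ref{PrIrreps} by a single binomial inversion, so that no new generating-function manipulation is really needed. The starting observation is representation-theoretic: since $\varrho \cong S^{(n-1,1)} \oplus S^{(n)}$ with $S^{(n)}$ the trivial representation, tensoring with $S^{(n)}$ does nothing, and so
\[
\varrho^{\otimes r} \;\cong\; \bigl(S^{(n-1,1)} \oplus S^{(n)}\bigr)^{\otimes r} \;\cong\; \bigoplus_{s=0}^{r} \binom{r}{s}\,(S^{(n-1,1)})^{\otimes s}.
\]
Comparing the multiplicity of $S^\lambda$ on the two sides gives $a_{\lambda, r} = \sum_{s=0}^{r} \binom{r}{s}\, b_{\lambda, s}$, and inverting this relation yields
\[
b_{\lambda, r} = \sum_{s=0}^{r} (-1)^{r-s}\binom{r}{s}\, a_{\lambda, s}.
\]
(The same identity also falls out directly from (\ref{EqTensorGF}) and (\ref{EqTensorGF2}), since the right-hand side of (\ref{EqTensorGF2}) is $e^{-x}$ times that of (\ref{EqTensorGF}); I would probably just note this as a remark.)

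It then remains to substitute the closed form (\ref{EqIrreps}) for each $a_{\lambda, s}$. Two small points need checking first. Since the sum in (\ref{EqTensorGF}) begins at $r = \abs{\bar{\lambda}}$ --- equivalently, since the inner sum in (\ref{EqIrreps}) is empty when $s < \abs{\bar{\lambda}}$ --- we have $a_{\lambda, s} = 0$ for $s < \abs{\bar{\lambda}}$, so the convolution may be truncated to $\abs{\bar{\lambda}} \le s \le r$ without discarding anything. And every such $s$ satisfies $1 \le s \le n - \lambda_2$ because $s \le r \le n - \lambda_2$, so Proposition \ref{PrIrreps} is applicable term by term. Inserting (\ref{EqIrreps}) and pulling the common factor $f^{\bar{\lambda}}$ out of the sum then produces exactly the asserted formula for $b_{\lambda, r}$.

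The argument is essentially mechanical, and the only thing to be careful about is precisely this bookkeeping of ranges: making sure the binomial inversion is applied on an interval where Proposition \ref{PrIrreps} is valid, and making sure none of the genuinely nonzero low-order terms of the inversion are dropped. I do not anticipate any real obstacle beyond that.
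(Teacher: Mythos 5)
Your proof is correct, and it reaches the paper's key identity $b_{\lambda, r} = \sum_{s}(-1)^{r-s}\binom{r}{s}a_{\lambda, s}$ by a genuinely different route. The paper obtains this relation analytically: it divides the Goupil--Chauve generating function (\ref{EqTensorGF2}) by (\ref{EqTensorGF}), observes that the quotient is $e^{-x}$, and reads off the resulting coefficient convolution. You derive it structurally, from $\varrho \cong S^{(n-1,1)}\oplus S^{(n)}$ and the binomial expansion of $(S^{(n-1,1)}\oplus S^{(n)})^{\otimes r}$, followed by binomial inversion. As you note, this makes the second Goupil--Chauve generating function unnecessary --- the corollary then rests only on Proposition \ref{PrIrreps} --- and it explains conceptually why the two generating functions differ by exactly a factor of $e^{-x}$; the paper's version is shorter but imports both generating functions as black boxes. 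The subsequent truncation to $s \ge \abs{\bar{\lambda}}$ and the substitution of (\ref{EqIrreps}) are exactly as in the paper. One pedantic caveat: when $\lambda = (n)$ we have $\abs{\bar{\lambda}} = 0$, so the term $s = 0$ appears in your truncated sum, which falls outside the stated hypothesis $1 \le r$ of Proposition \ref{PrIrreps}; under the convention $\stirling{0}{0}=1$ the formula (\ref{EqIrreps}) still returns the correct value $a_{(n),0}=1$, so nothing breaks, but your claim that every surviving $s$ satisfies $s \ge 1$ is not literally true in that one case and deserves a sentence.
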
  
\begin{proof}
Comparing (\ref{EqTensorGF2}) with (\ref{EqTensorGF}) gives
\begin{equation}
\sum_{r \ge \abs{\bar{\lambda}}} b_{\lambda, r} \frac{x^r}{r!} = \left(\sum_{s \ge \abs{\bar{\lambda}}} a_{\lambda, s} \frac{x^s}{s!}\right) e^{-x} = \left(\sum_{s \ge \abs{\bar{\lambda}}} a_{\lambda, s} \frac{x^s}{s!}\right) \left(\sum_{t \ge 0}\frac{(-x)^t}{t!}\right),
\end{equation}
so that 
\begin{equation}
\frac{b_{\lambda, r}}{r!}= \sum_{s+t=r} \frac{(-1)^t a_{\lambda, s}}{s!t!} = \sum_{s=\abs{\bar{\lambda}}}^r \frac{(-1)^{r-s} }{s!(r-s)!}\left(f^{\bar{\lambda}} \sum_{i=\abs{\bar{\lambda}}}^s \binom{i}{\abs{\bar{\lambda}}}\stirling{s}{i}\right),
\end{equation}
and the result follows.
\end{proof}
\begin{remark}
Corollary \ref{CoIrreps} is very similar to Proposition 2 of \cite{GC06}, but our result is slightly cleaner, as it does not involve associated Stirling numbers of the second kind.  
\end{remark}

For our second proposition, we use $S^{(n-1, 1)}$ to prove a martingale-like property about the number of fixed points for certain Markov chains on $S_n$.  Before doing so, however, some preliminaries are in order.

Let $\mu$ be a measure on a finite group $G$.  The Fourier transform of $\mu$ is a matrix-valued map on the irreps of $G$ defined by $\hat{\mu}(\rho) = \sum_{g\in G} \mu(g)\rho(g)$.  The convolution of two measures $\mu$ and $\nu$ on $G$ is the measure defined by
\begin{equation}
\mu \ast \nu = \sum_{h\in G}\mu(gh^{-1})\nu(h), 
\end{equation}
and the Fourier transform transforms convolutions to pointwise products: $\widehat{\mu \ast \nu}=\hat{\mu}\hat{\nu}$.  If $\mu$ is a class measure, then for every irrep $\rho$ of $G$, we have that
\begin{equation}\label{EqClassMeas}
\hat{\mu}(\rho)=\left(\frac{1}{d_{\rho}}\sum_g \mu(g)\chi_{\rho}(g)\right)I_{d_\rho},
\end{equation}
where $d_{\rho}$ is the dimension of $\rho$.  For a detailed introduction to non-commutative Fourier analysis in the context of Markov chain theory, see Chapter 16 of \cite{Beh00}.

Let $E_{\mu}$ denote expectation with respect to $\mu$, and let $\rho$ be an irrep of $S_n$, then as observed in Chapter 3D of \cite{Dia88}, 
\begin{equation}\label{EqDia}
E_{\mu}(\chi_{\rho})=\sum_{\sigma\in S_n}\mu(\sigma)\text{tr}(\rho(\sigma)) = \text{tr}\left(\sum_{\sigma\in S_n}\mu(\sigma)\rho(\sigma)\right) = \text{tr}(\hat{\mu}(\rho)).
\end{equation}
We can now state and prove the proposition, which says that if a Markov chain on $S_n$ whose increment distributions are class measures starts with one fixed point, then it will always average exactly one fixed point.

\begin{proposition}\label{PrOneFP}
Form  Markov chain $\{X_i\}$ on $S_n$ as follows: let $X_0$ be the identity, and set $X_1=\tau_1 X_0$, where $\tau_1$ is selected according to any class measure supported on the set of permutations with one fixed point.  For $k \ge 2$, set $X_k = \tau_k X_{k-1}$, where $\tau_k$ is selected according to any class measure on $S_n$ (the measure can be different for each $k$).  Then the expected number of fixed points of $X_k$ is one for all $k \ge 1$.
\end{proposition}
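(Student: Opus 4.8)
The plan is to combine the character-theoretic description of fixed points with the multiplicativity of the Fourier transform and the scalar form of $\hat{\mu}(\rho)$ for class measures. First I would reduce the assertion to a statement about a single character: for any $\sigma\in S_n$ the number of fixed points of $\sigma$ is $\chi_\varrho(\sigma)=\chi_{S^{(n-1,1)}}(\sigma)+\chi_{S^{(n)}}(\sigma)=\chi_{S^{(n-1,1)}}(\sigma)+1$. Hence if $\mu_k$ denotes the law of $X_k$, the expected number of fixed points of $X_k$ equals $E_{\mu_k}(\chi_\varrho)=E_{\mu_k}(\chi_{S^{(n-1,1)}})+1$, so it suffices to show $E_{\mu_k}(\chi_{S^{(n-1,1)}})=0$ for every $k\ge1$.

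Next I would identify $\mu_k$ explicitly. Since $X_0$ is the identity and $X_k=\tau_k X_{k-1}$ with the $\tau_i$ independent, an easy induction gives $X_k=\tau_k\tau_{k-1}\cdots\tau_1$, so that $\mu_k=\nu_k\ast\nu_{k-1}\ast\cdots\ast\nu_1$, where $\nu_i$ is the distribution of $\tau_i$. Taking Fourier transforms and using $\widehat{\mu\ast\nu}=\hat\mu\hat\nu$, for every irrep $\rho$ we get $\hat{\mu_k}(\rho)=\hat{\nu_k}(\rho)\cdots\hat{\nu_1}(\rho)$. Because each $\nu_i$ is a class measure, (\ref{EqClassMeas}) tells us each $\hat{\nu_i}(\rho)$ is a scalar multiple of $I_{d_\rho}$; writing $c_i(\rho)=\tfrac{1}{d_\rho}\sum_\sigma \nu_i(\sigma)\chi_\rho(\sigma)=\tfrac{1}{d_\rho}E_{\nu_i}(\chi_\rho)$, the product is again scalar, so $\hat{\mu_k}(\rho)=\bigl(\prod_{i=1}^k c_i(\rho)\bigr)I_{d_\rho}$, and by (\ref{EqDia}), $E_{\mu_k}(\chi_\rho)=\operatorname{tr}\hat{\mu_k}(\rho)=d_\rho\prod_{i=1}^k c_i(\rho)$.

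Finally I would specialize to $\rho=S^{(n-1,1)}$, for which $d_\rho=n-1$. The crucial factor is the first one, $c_1(S^{(n-1,1)})=\tfrac{1}{n-1}E_{\nu_1}(\chi_{S^{(n-1,1)}})$, and this vanishes: $\nu_1$ is supported on permutations with exactly one fixed point, and at any such $\sigma$ we have $\chi_{S^{(n-1,1)}}(\sigma)=(\text{number of fixed points of }\sigma)-1=0$. Therefore $E_{\mu_k}(\chi_{S^{(n-1,1)}})=(n-1)\,c_1(S^{(n-1,1)})\prod_{i=2}^{k}c_i(S^{(n-1,1)})=0$ for every $k\ge1$, irrespective of the subsequent class measures, and by the reduction in the first step the expected number of fixed points of $X_k$ is $0+1=1$.

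I do not anticipate a genuine obstacle; once the ingredients are assembled the argument is bookkeeping. The only points needing care are getting the order of convolution (and hence of the matrix product) correct in the middle step — though since scalar matrices commute this is immaterial to the conclusion — and isolating the key observation that the \emph{first} increment already forces the $S^{(n-1,1)}$-character of the walk to have mean zero, after which each later class-measure step merely rescales that mean by a scalar and so preserves it.
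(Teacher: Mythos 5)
Your proposal is correct and follows essentially the same route as the paper: reduce to showing $E_{\mu_k}(\chi_{S^{(n-1,1)}})=0$, write $\mu_k$ as a convolution, use multiplicativity of the Fourier transform together with (\ref{EqClassMeas}) and (\ref{EqDia}), and observe that the first factor vanishes because $\chi_{S^{(n-1,1)}}$ is zero on permutations with exactly one fixed point. The only cosmetic difference is that you justify this last vanishing via the identity $\chi_{S^{(n-1,1)}}(\sigma)=\chi_\varrho(\sigma)-1$ (already noted in the paper's introduction), whereas the paper invokes the Murnaghan--Nakayama rule; both are valid.
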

\begin{proof}
Let $\nu_1$ be a class measure supported on the set of permutations with one fixed point, $\nu_2, \nu_3, \ldots, \nu_k$ be class measures on $S_n$, and define $\mu_k = \nu_k \ast \cdots \ast \nu_2 \ast \nu_1$.  By (\ref{EqDia}), 
\begin{equation}
E_{\mu_k} (\chi_{S^{(n-1, 1)}})=\text{tr}[\widehat{\mu_k}(S^{(n-1,1)})]=\text{tr}[\widehat{\nu_1}(S^{(n-1, 1)})\widehat{\nu_2}(S^{(n-1, 1)}) \cdots \widehat{\nu_k}(S^{(n-1, 1)})],
\end{equation}
where 
\begin{equation}
\widehat{\nu_1}(S^{(n-1, 1)})= \left(\frac{1}{n-1}\sum_{\sigma \in S_n} \nu_1(\sigma)\chi_{S^{(n-1, 1)}}(\sigma)\right)I_{n-1}
\end{equation}
by (\ref{EqClassMeas}).
Consider the anatomy of the partition $(n-1, 1)$: under the Murnaghan-Nakayama rule (see Theorem 4.10.2 of \cite{Sag10}), the only way for a single box to remain at the end is for the box in the second row to have been removed as a singleton, which requires a cycle type with at least two fixed points.  This means that $\chi_{S^{(n-1, 1)}}(\sigma)=0$ if $\sigma$ has one fixed point.  On the other hand, if $\sigma$ does not have exactly one fixed point, then $\nu_1(\sigma)=0$.  Thus $\widehat{\nu_1}(S^{(n-1, 1)})=\mathbf{0}$, which in turn implies that $E_{\mu_k} (\chi_{S^{(n-1, 1)}})=0$, and hence the expected number of fixed points with respect to $\mu_k$ is one for all $k \ge 1$.
\end{proof}

\end{document}